\documentclass[12pt]{article}

\usepackage[T1]{fontenc}
\usepackage[utf8]{inputenc}
\usepackage{lmodern}

\usepackage{mathtools,amssymb,amsthm}
\usepackage{booktabs}
\usepackage{graphicx}
\usepackage{hyperref}

\numberwithin{equation}{section}

\newtheorem{theorem}{Theorem}
\newtheorem{lemma}{Lemma}
\newtheorem{corollary}{Corollary}
\newtheorem{proposition}{Proposition}
\theoremstyle{remark}
\newtheorem{remark}{Remark}

\newcommand{\Mob}{\mu_{\mathrm M}}

\usepackage{geometry}
\title{Thinned Wallis-type prime products in residue classes modulo $2^m$}

\author{Mike Winkler\\[1mm]
        \small Fakult\"at f\"ur Mathematik, Ruhr-Universit\"at Bochum, Germany\\ \small mike.winkler@ruhr-uni-bochum.de
        }

\date{\small June 20, 2026}

\begin{document}

\maketitle

\begin{abstract}
	For odd primes $p$ we consider the factors
	\[
		A(p)=\frac{p-\chi_4(p)}{p+\chi_4(p)},
		\qquad
		\chi_4(p)=
		\begin{cases}
			 \ \ 1,&p\equiv 1\pmod 4,\\
			    -1,&p\equiv 3\pmod 4,
		\end{cases}
	\]
	and study products of $A(p)$ restricted to unions of residue classes modulo $2^m$. We give a simple criterion for the existence of a finite nonzero limit, prove a logarithmic asymptotic in the general case, express the limiting constant in terms of Mertens-type constants in arithmetic progressions (hence in terms of Dirichlet $L$-values), and give reproducible high-precision computations of the resulting constants.
\end{abstract}

%\vspace{3mm}
%\noindent\textit{2020 Mathematics Subject Classification.} 11N37, 11N13 (primary); 11M20, 11M06 (secondary).
%
%\vspace{2mm}
%\noindent\textit{Key words and phrases.} prime products; Mertens constants; arithmetic progressions; Dirichlet characters; Euler products.

Define $A(p)$ as in the abstract. The full product over odd primes has a finite limit:
\[
\prod_{p\ \mathrm{odd}} A(p)=2.
\]
The classical Wallis product is
\[
\frac{\pi}{2}=\prod_{n=1}^{\infty}\frac{(2n)^2}{(2n-1)(2n+1)}.
\]
Equivalently, after indexing by odd integers $r>1$, it can be written as
\[
\frac{\pi}{2}=\prod_{\substack{r>1\\ r\ \mathrm{odd}}}\frac{r-\chi_4(r)}{r+\chi_4(r)}.
\]
We call the factors in this odd-integer formulation the Wallis factors. Thus, $A(p)$ is obtained by replacing the odd integer $r$ in a Wallis factor by an odd prime $p$.
Thinning by congruence conditions modulo $2^m$ is natural here, since $\chi_4$ is a character modulo $4$ and thus constant on residue classes modulo $2^m$. By \textit{thinning} we simply mean restricting the product to primes in a prescribed set of residue classes. The aim is to understand what happens after restricting $p$ to selected residue classes modulo $q=2^m$: for which selections does the product still converge to a finite nonzero limit?

\section{Notation and Preliminary Facts}

Fix $q=2^m$ with $m\ge 2$. Let $(\mathbb Z/q\mathbb Z)^\times$ be the group of reduced residue classes modulo $q$. Throughout, $p$ denotes an odd prime. For $S\subset(\mathbb Z/q\mathbb Z)^\times$ define
\[
P_S(x)=\prod_{\substack{p\le x\\ p\bmod q\in S}} A(p).
\]
Since $4$ divides $q$, each $a\in(\mathbb Z/q\mathbb Z)^\times$ has a well-defined value $\chi_4(a)\in\{\pm 1\}$. We use Mertens' theorem in arithmetic progressions in the form
\[
\sum_{\substack{p\le x\\ p\equiv a\ (q)}} \frac{1}{p}
=\frac{1}{\varphi(q)}\log\log x + B(q,a) + o(1),
\qquad x\to\infty,
\]
for each $a\in(\mathbb Z/q\mathbb Z)^\times$, with constants $B(q,a)$ depending on $q$ and $a$; see \cite{Williams}.

\section{A Structural Identity}

The following elementary factorization is useful throughout the paper.
\begin{lemma}\label{lem:factor}
For every odd prime $p$ one has
\[
A(p)=\frac{(1-\chi_4(p)/p)^2}{1-1/p^2}.
\]
\end{lemma}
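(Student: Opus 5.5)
The plan is a direct algebraic verification: clear denominators on the right-hand side and use $\chi_4(p)^2=1$. Write $\varepsilon=\chi_4(p)\in\{\pm1\}$. Then
\[
A(p)=\frac{p-\varepsilon}{p+\varepsilon}.
\]
Multiply numerator and denominator of the claimed right-hand side by $p^2$:
\[
\frac{(1-\varepsilon/p)^2}{1-1/p^2}=\frac{(p-\varepsilon)^2}{p^2-1}.
\]

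The key step is to factor the denominator using $\varepsilon^2=1$:
\[
p^2-1=p^2-\varepsilon^2=(p-\varepsilon)(p+\varepsilon).
\]
Both factors are nonzero, since $p\ge 3$ gives $p\pm\varepsilon\ge 2$. Cancelling one factor $p-\varepsilon$ therefore gives $(p-\varepsilon)/(p+\varepsilon)=A(p)$, which is the identity in Lemma~\ref{lem:factor}.

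Essentially nothing here is an obstacle. The only point requiring any care is to note explicitly that $\chi_4(p)^2=1$ for odd $p$, and that this is what allows $1$ to be written as $\varepsilon^2$ in the denominator. Equivalently, one can check the cases $\varepsilon=1$ and $\varepsilon=-1$ separately; each is a one-line computation.
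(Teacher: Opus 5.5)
Your proof is correct and is essentially the paper's argument run in reverse: the paper multiplies numerator and denominator of $(p-\chi_4(p))/(p+\chi_4(p))$ by $p-\chi_4(p)$ to get $(p-\chi_4(p))^2/(p^2-\chi_4(p)^2)$, then divides by $p^2$ using $\chi_4(p)^2=1$. This is exactly your factorization $p^2-1=(p-\varepsilon)(p+\varepsilon)$ read the other way, and your explicit check that $p\pm\varepsilon\neq 0$ is a small detail the paper leaves implicit.
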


\begin{proof}
We compute
\[
\frac{p-\chi_4(p)}{p+\chi_4(p)}
=\frac{(p-\chi_4(p))^2}{p^2-\chi_4(p)^2}
=\frac{(1-\chi_4(p)/p)^2}{1-1/p^2},
\]
since $\chi_4(p)^2=1$ for odd primes $p$.
\end{proof}

\section{Asymptotic Behavior and the Balance Criterion}

For $S\subset(\mathbb Z/q\mathbb Z)^\times$ set
\[
\mu(S)=\sum_{a\in S}\chi_4(a).
\]
The main asymptotic statement is as follows.
\begin{theorem}\label{thm:asymptotic}
There exists $K(q,S)\in(0,\infty)$ such that
\[
P_S(x)=K(q,S)\,(\log x)^{-2\mu(S)/\varphi(q)}(1+o(1)),
\qquad x\to\infty.
\]
\end{theorem}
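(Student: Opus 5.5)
Take logarithms and use Lemma~\ref{lem:factor}. Since $A(p)>0$ for every odd prime $p$, we may write
\[
\log P_S(x)=\sum_{\substack{p\le x\\ p\bmod q\in S}}\Bigl(2\log\bigl(1-\chi_4(p)/p\bigr)-\log\bigl(1-1/p^2\bigr)\Bigr).
\]
The term $-\log(1-1/p^2)$ is $O(1/p^2)$. Hence its sum over the restricted primes converges absolutely to a finite constant $C_1(q,S)$, with tail $O(1/x)$.

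For the first term, expand $\log(1-\chi_4(p)/p)=-\chi_4(p)/p+R(p)$, where $|R(p)|\le 1/p^2$ for $p\ge 3$. The sum $\sum 2R(p)$ over the restricted primes again converges absolutely to a constant $C_2(q,S)$ with tail $o(1)$. We are left with the main term
\[
-2\sum_{\substack{p\le x\\ p\bmod q\in S}}\frac{\chi_4(p)}{p}
=-2\sum_{a\in S}\chi_4(a)\sum_{\substack{p\le x\\ p\equiv a\ (q)}}\frac1p.
\]
This decomposition is legitimate because $4\mid q$, so $\chi_4$ is constant on each class $a\in S$.

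Apply Mertens' theorem in arithmetic progressions, as recalled in Section~1, to each of the finitely many classes $a\in S$. This gives
\[
-2\sum_{a\in S}\chi_4(a)\Bigl(\frac{\log\log x}{\varphi(q)}+B(q,a)+o(1)\Bigr)
=-\frac{2\mu(S)}{\varphi(q)}\log\log x-2\sum_{a\in S}\chi_4(a)B(q,a)+o(1).
\]
Altogether,
\[
\log P_S(x)=-\frac{2\mu(S)}{\varphi(q)}\log\log x+\log K(q,S)+o(1),
\]
where
\[
\log K(q,S)=C_1(q,S)+C_2(q,S)-2\sum_{a\in S}\chi_4(a)B(q,a).
\]
This is a finite real number, so $K(q,S)\in(0,\infty)$. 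Exponentiating and using $e^{o(1)}=1+o(1)$ gives the claimed asymptotic.

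There is no serious obstacle. The only analytic input is the Mertens-type estimate for each progression, which is quoted from \cite{Williams}. The one point that needs care is that every expansion error is $O(1/p^2)$ uniformly in $p\ge3$, so the corresponding series converge absolutely and only affect the constant. The case $S=\emptyset$ is trivial, with $K=1$ and $\mu(S)=0$.
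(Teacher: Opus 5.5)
Your proof is correct and follows essentially the paper's route. Both take logarithms, isolate the main term $-2\sum\chi_4(p)/p$, split it over the classes $a\in S$ (using $4\mid q$), and apply Mertens' theorem in arithmetic progressions. The only cosmetic difference is that you first factor via Lemma~\ref{lem:factor} with $O(1/p^2)$ errors, whereas the paper expands $\log\frac{1-u}{1+u}=-2u+O(u^3)$ directly. Your remark that the absolutely convergent error series give a genuine constant plus an $o(1)$ tail is slightly more careful than the paper, which just absorbs them as an $O(1)$ term in $C(q,S)$.
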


\begin{proof}
Write
\[
A(p)=\frac{p-\chi_4(p)}{p+\chi_4(p)}=\frac{1-\chi_4(p)/p}{1+\chi_4(p)/p}.
\]
With $u=\chi_4(p)/p$ we have $|u|\le 1/3$ for $p\ge 3$, hence
\[
\log\frac{1-u}{1+u}=-2u+O(u^3),
\qquad u\to 0,
\]
and therefore
\[
\log A(p)=-\frac{2\chi_4(p)}{p}+O\left(\frac{1}{p^3}\right).
\]
Summing over primes $p\le x$ with $p\bmod q\in S$ yields
\[
\log P_S(x)
= -2\sum_{\substack{p\le x\\ p\bmod q\in S}}\frac{\chi_4(p)}{p}
+O\left(\sum_p\frac{1}{p^3}\right)
= -2\sum_{\substack{p\le x\\ p\bmod q\in S}}\frac{\chi_4(p)}{p}
+O(1),
\]
since $\sum_p p^{-3}$ converges. Decompose the main sum into residue classes:
\[
\sum_{\substack{p\le x\\ p\bmod q\in S}}\frac{\chi_4(p)}{p}
=\sum_{a\in S}\chi_4(a)\sum_{\substack{p\le x\\ p\equiv a\ (q)}}\frac{1}{p}.
\]
By Mertens' theorem in arithmetic progressions,
\[
\sum_{\substack{p\le x\\ p\equiv a\ (q)}}\frac{1}{p}
=\frac{1}{\varphi(q)}\log\log x + B(q,a)+o(1),
\qquad x\to\infty,
\]
for each $a\in(\mathbb Z/q\mathbb Z)^\times$. Hence,
\[
\log P_S(x)= -\frac{2\mu(S)}{\varphi(q)}\log\log x + C(q,S)+o(1),
\]
where $C(q,S)=-2\sum_{a\in S}\chi_4(a)B(q,a)+O(1)$ is finite. Exponentiating gives
\[
P_S(x)=K(q,S)\,(\log x)^{-2\mu(S)/\varphi(q)}(1+o(1)),
\]
with $K(q,S)=e^{C(q,S)}\in(0,\infty)$.
\end{proof}

The preceding theorem immediately gives the convergence criterion.
\begin{corollary}\label{cor:criterion}
The limit $\lim_{x\to\infty}P_S(x)$ exists in $(0,\infty)$ if and only if $\mu(S)=0$.
\end{corollary}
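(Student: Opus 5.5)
The plan is to read both directions of the criterion directly off Theorem~\ref{thm:asymptotic}. We write $\alpha=-2\mu(S)/\varphi(q)$, so that
\[
P_S(x)=K(q,S)\,(\log x)^{\alpha}(1+o(1)),\qquad K(q,S)\in(0,\infty).
\]
Everything then reduces to the elementary behaviour of $(\log x)^{\alpha}$ as $x\to\infty$, which depends only on the sign of $\alpha$.

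For sufficiency, suppose $\mu(S)=0$. Then $\alpha=0$, so $(\log x)^{\alpha}=1$ and the asymptotic becomes $P_S(x)=K(q,S)(1+o(1))$. Hence $\lim_{x\to\infty}P_S(x)=K(q,S)$, which lies in $(0,\infty)$ by Theorem~\ref{thm:asymptotic}.

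For necessity, suppose $\mu(S)\neq 0$, so that $\alpha\neq0$. Since $\log x\to\infty$, we get $(\log x)^{\alpha}\to\infty$ if $\alpha>0$, that is if $\mu(S)<0$. If instead $\alpha<0$, that is $\mu(S)>0$, we get $(\log x)^{\alpha}\to 0$. Because $K(q,S)$ is a fixed positive constant and $1+o(1)\to1$, $P_S(x)$ either diverges to $+\infty$ or tends to $0$. In neither case is there a limit in $(0,\infty)$.

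I see no genuine obstacle, since the analytic content already sits in Theorem~\ref{thm:asymptotic} through Mertens' theorem in progressions. The only point needing care is that $K(q,S)$ is strictly positive and finite, so that multiplying by it cannot cancel the growth or decay of $(\log x)^{\alpha}$. This is exactly what Theorem~\ref{thm:asymptotic} guarantees, because $K(q,S)=e^{C(q,S)}$ with $C(q,S)$ finite.
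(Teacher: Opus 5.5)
Your proposal is correct and takes the same approach as the paper, which simply says the criterion follows immediately from Theorem~\ref{thm:asymptotic}. You have spelled out the case analysis on the sign of the exponent $-2\mu(S)/\varphi(q)$, using $K(q,S)\in(0,\infty)$, which the paper leaves implicit.
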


\section{The Limiting Constant}

For each $a\in(\mathbb Z/q\mathbb Z)^\times$ define
\[
C(q,a)=\lim_{x\to\infty}(\log x)^{1/\varphi(q)}
\prod_{\substack{p\le x\\ p\equiv a\ (q)}}\left(1-\frac{1}{p}\right),
\]
and
\[
D(q,a)=\prod_{\substack{p\\ p\equiv a\ (q)}}\left(1-\frac{1}{p^2}\right).
\]
Note that $D(q,a)=\zeta_{q,a}(2)^{-1}$, where $\zeta_{q,a}(s)=\prod_{p\equiv a\ (q)}(1-p^{-s})^{-1}$. This normalization matches the standard Mertens product asymptotic in arithmetic progressions; see \cite{Williams}.
We shall use the following formula of Williams.
\begin{proposition}[Williams {\cite[Theorem 1]{Williams}}]\label{prop:williamsC}
For each $a\in(\mathbb Z/q\mathbb Z)^\times$ one has
\[
C(q,a)=\left(e^{-\gamma}\frac{q}{\varphi(q)}
\prod_{\substack{\chi\bmod q\\ \chi\ne\chi_0}}
\left(\frac{K_{\mathrm W}(1,\chi)}{L(1,\chi)}\right)^{\overline{\chi}(a)}
\right)^{1/\varphi(q)},
\]
where $\chi_0$ denotes the principal character, $\gamma$ is Euler's constant, and $K_{\mathrm W}(1,\chi)$ is the nonzero constant appearing in Williams' theorem.
\end{proposition}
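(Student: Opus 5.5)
The plan is to prove Proposition~\ref{prop:williamsC} by orthogonality of Dirichlet characters modulo $q$. This splits the restricted Mertens product into a principal-character piece, which carries the $\log x$ growth, and nonprincipal pieces, which converge to finite limits. For $p\nmid q$ the indicator of $p\equiv a\ (q)$ is $\varphi(q)^{-1}\sum_{\chi}\overline{\chi}(a)\chi(p)$. Hence
\[
\log\prod_{\substack{p\le x\\ p\equiv a\ (q)}}\Bigl(1-\frac1p\Bigr)
=\frac{1}{\varphi(q)}\sum_{\chi\bmod q}\overline{\chi}(a)\sum_{p\le x}\chi(p)\log\Bigl(1-\frac1p\Bigr).
\]
I will treat the term $\chi=\chi_0$ and the terms $\chi\ne\chi_0$ separately.

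For $\chi=\chi_0$ the inner sum is $\log\prod_{2<p\le x}(1-1/p)$, since $q=2^m$ and only $p=2$ is excluded. By the classical Mertens theorem this equals
\[
\log\frac{e^{-\gamma}}{\log x}+\log 2+o(1).
\]
Since $q/\varphi(q)=2$ for $q=2^m$, this is $\log\bigl(e^{-\gamma}\tfrac{q}{\varphi(q)}\bigr)-\log\log x+o(1)$. After division by $\varphi(q)$, the term $-\varphi(q)^{-1}\log\log x$ is exactly cancelled by the normalizing factor $(\log x)^{1/\varphi(q)}$ in the definition of $C(q,a)$.

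For $\chi\ne\chi_0$ I take Williams' constant to be
\[
K_{\mathrm W}(1,\chi)=\prod_p\Bigl(1-\frac1p\Bigr)^{\chi(p)}\Bigl(1-\frac{\chi(p)}{p}\Bigr)^{-1},
\]
with each factor given through its principal logarithm. This product converges absolutely, because
\[
\chi(p)\log\Bigl(1-\frac1p\Bigr)-\log\Bigl(1-\frac{\chi(p)}{p}\Bigr)=O\Bigl(\frac{1}{p^2}\Bigr).
\]
Consequently,
\[
\sum_{p\le x}\chi(p)\log\Bigl(1-\frac1p\Bigr)=\log K_{\mathrm W}(1,\chi)+\sum_{p\le x}\log\Bigl(1-\frac{\chi(p)}{p}\Bigr)+o(1).
\]
The remaining sum converges to $-\log L(1,\chi)$. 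This uses the prime number theorem in arithmetic progressions, or equivalently $L(1,\chi)\ne0$ together with the convergence of $\sum_p\chi(p)/p$, which is also the content of the Mertens theorem in progressions quoted from \cite{Williams}. Since $K_{\mathrm W}(1,\chi)$ is an absolutely convergent product of nonzero factors, it is nonzero, as the statement requires.

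Assembling the pieces, the logarithm of the normalized product tends to
\[
\frac{1}{\varphi(q)}\Bigl[\log\Bigl(e^{-\gamma}\frac{q}{\varphi(q)}\Bigr)+\sum_{\chi\ne\chi_0}\overline{\chi}(a)\bigl(\log K_{\mathrm W}(1,\chi)-\log L(1,\chi)\bigr)\Bigr].
\]
Exponentiating this limit gives the asserted formula.

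The main obstacle is the meaning of the complex powers $\bigl(K_{\mathrm W}(1,\chi)/L(1,\chi)\bigr)^{\overline{\chi}(a)}$. They must be read with the specific logarithms produced above. Here $\log L(1,\chi)$ means $\lim_{x\to\infty}-\sum_{p\le x}\log(1-\chi(p)/p)$, and $\log K_{\mathrm W}(1,\chi)$ means the corresponding absolutely convergent sum of logarithms, rather than arbitrary branches. I would make this explicit by pairing $\chi$ with $\overline{\chi}$: the chosen logarithms for $\overline{\chi}$ are the complex conjugates of those for $\chi$, so the total exponent is real. This confirms that $C(q,a)$ is a well-defined positive real number, consistent with its definition as a limit of positive quantities.
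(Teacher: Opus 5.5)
Your argument is correct. It differs from the paper only in that the paper gives no argument at all: Proposition~\ref{prop:williamsC} is imported verbatim from Williams' Theorem~1, and $K_{\mathrm W}(1,\chi)$ is left undefined. Your derivation is the standard proof and works for any modulus $q$, replacing $2$ by $q/\varphi(q)=\prod_{p\mid q}(1-1/p)^{-1}$. It consists of three pieces: orthogonality of characters, the classical Mertens product for $\chi_0$, and the splitting $\prod_p(1-1/p)^{\chi(p)}=K_{\mathrm W}(1,\chi)/L(1,\chi)$ for $\chi\neq\chi_0$.

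What your proof buys over the citation is precision. It pins down $\log K_{\mathrm W}(1,\chi)=\sum_{k\ge2}\frac1k\sum_p(\chi(p)^k-\chi(p))p^{-k}$, which is exactly the normalization the displayed formula forces. It also fixes the logarithms that define the complex powers.

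Your closing paragraph on branches is not cosmetic. Replacing $\log L(1,\chi)$ by $\log L(1,\chi)+2\pi i$ multiplies $(K_{\mathrm W}(1,\chi)/L(1,\chi))^{\overline{\chi}(a)}$ by $e^{-2\pi i\overline{\chi}(a)}$. This factor has modulus $e^{2\pi\,\mathrm{Im}\,\overline{\chi}(a)}\neq1$ whenever $\overline{\chi}(a)$ is non-real, and such characters exist for $q=2^m$ with $m\ge4$. So the statement as written in the paper is only meaningful with a convention like yours.

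One step should be written out rather than called \emph{equivalent} to $L(1,\chi)\neq0$ plus convergence of $\sum_p\chi(p)/p$: the identification of the limit. Convergence of $\sum_p\chi(p)/p$ also follows directly from the quoted Mertens theorem in progressions, since $\sum_a\chi(a)=0$ cancels the $\log\log x$ terms. But convergence only shows that $\sum_{p\le x}\log(1-\chi(p)/p)$ has a limit. To see that its exponential equals $1/L(1,\chi)$, you need the Abelian step $s\to1^+$. That step uses the uniform convergence of $\sum_p\chi(p)p^{-s}$ on $[1,2]$ and the continuity of $L(s,\chi)$ at $s=1$. It is standard, but it is the actual content of that line.
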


The next result identifies the constant $K(q,S)$ from Theorem~\ref{thm:asymptotic}. The balance condition is not needed for this identification; it is needed only for the disappearance of the logarithmic factor. The analytic framework for the Mertens product constants in arithmetic progressions is developed by Languasco and Zaccagnini \cite{LZ,LZII}. Efficient algorithms for high-precision evaluation of $C(q,a)$ are given in \cite[(5)]{LZII}; for $\zeta_{q,a}(2)$, and hence for $D(q,a)$, see Languasco and Moree \cite[(5.16)]{LM}.
\begin{theorem}\label{thm:constant}
For every subset $S\subset(\mathbb Z/q\mathbb Z)^\times$ one has
\[
P_S(x)=
\left(
\prod_{a\in S}
\left(\frac{C(q,a)^2}{D(q,a)}\right)^{\chi_4(a)}
\right)
(\log x)^{-2\mu(S)/\varphi(q)}(1+o(1)).
\]
In particular,
\[
K(q,S)=
\prod_{a\in S}
\left(\frac{C(q,a)^2}{D(q,a)}\right)^{\chi_4(a)}.
\]
If $\mu(S)=0$, then
\[
\lim_{x\to\infty}P_S(x)
=\prod_{a\in S}\left(\frac{C(q,a)^2}{D(q,a)}\right)^{\chi_4(a)}.
\]
\end{theorem}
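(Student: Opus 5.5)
The plan is to start from the exact factorization in Lemma~\ref{lem:factor} instead of the Taylor expansion used for Theorem~\ref{thm:asymptotic}. Each residue class $a\in S$ has a fixed sign $\chi_4(a)=\pm1$, so I split $P_S(x)$ into a product over $a\in S$ of partial products $P_{\{a\}}(x)$ and treat the two signs separately. For $\chi_4(a)=1$, Lemma~\ref{lem:factor} gives directly
\[
P_{\{a\}}(x)=\frac{\prod_{p\le x,\,p\equiv a\,(q)}(1-1/p)^2}{\prod_{p\le x,\,p\equiv a\,(q)}(1-1/p^2)}.
\]
For $\chi_4(a)=-1$, I use the identity $(1+1/p)(1-1/p)=1-1/p^2$, which gives
\[
A(p)=\frac{(1+1/p)^2}{1-1/p^2}=\frac{1-1/p^2}{(1-1/p)^2}.
\]
So in both cases
\[
P_{\{a\}}(x)=\left(\frac{\prod_{p\le x,\,p\equiv a}(1-1/p)^2}{\prod_{p\le x,\,p\equiv a}(1-1/p^2)}\right)^{\chi_4(a)}.
\]

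Next I multiply and divide by $(\log x)^{2/\varphi(q)}$ inside each bracket. By the definition of $C(q,a)$, whose existence is guaranteed by Proposition~\ref{prop:williamsC}, together with $C(q,a)>0$, we have
\[
(\log x)^{2/\varphi(q)}\prod_{p\le x,\,p\equiv a}(1-1/p)^2 = C(q,a)^2\,(1+o(1)).
\]
The denominator tends to $D(q,a)\in(0,1]$, since the product over $p\equiv a$ of $1-1/p^2$ converges absolutely to a nonzero limit. Hence each bracket equals
\[
\frac{C(q,a)^2}{D(q,a)}\,(\log x)^{-2/\varphi(q)}\,(1+o(1)).
\]
Raising to the power $\chi_4(a)$ keeps the factor of the form $1+o(1)$, because the limit is positive. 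Taking the finite product over $a\in S$ collects the exponents of $\log x$ into $-2\sum_{a\in S}\chi_4(a)/\varphi(q)=-2\mu(S)/\varphi(q)$. This yields the displayed asymptotic.

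The identification of $K(q,S)$ then follows from uniqueness of the constant in an asymptotic of the form $K(\log x)^{\alpha}(1+o(1))$, comparing with Theorem~\ref{thm:asymptotic}. The case $\mu(S)=0$ is immediate, since the power of $\log x$ is then $1$.

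The only point needing care is that $C(q,a)$ really is a finite positive limit, so that the $1+o(1)$ factors can be inverted when $\chi_4(a)=-1$. I would take this from Proposition~\ref{prop:williamsC}, where the right side is an explicit positive real number. Alternatively, it follows from Mertens' theorem in progressions as stated in Section~1, because $\log(1-1/p)=-1/p+O(p^{-2})$. Everything else is bookkeeping with finitely many classes.
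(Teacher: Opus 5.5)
Your proposal is correct and follows essentially the same route as the paper. It uses the factorization of Lemma~\ref{lem:factor}, then the identity $1+1/p=(1-1/p^2)/(1-1/p)$ on classes with $\chi_4(a)=-1$ to write each class contribution as $\bigl(\prod(1-1/p)^2/\prod(1-1/p^2)\bigr)^{\chi_4(a)}$, and finally the definitions of $C(q,a)$ and $D(q,a)$. Your explicit remark that $C(q,a)$ is a finite positive limit, so that the $1+o(1)$ factors can be inverted, makes precise a point the paper leaves implicit.
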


\begin{proof}
By Lemma~\ref{lem:factor},
\[
A(p)=\frac{(1-\chi_4(p)/p)^2}{1-1/p^2},
\]
so
\[
P_S(x)=
\Biggl(\prod_{\substack{p\le x\\ p\bmod q\in S}}\Bigl(1-\frac{\chi_4(p)}{p}\Bigr)\Biggr)^2
\cdot
\prod_{\substack{p\le x\\ p\bmod q\in S}}\Bigl(1-\frac{1}{p^2}\Bigr)^{-1}.
\]
Fix a reduced residue class $a$ modulo $q$. Since $4$ divides $q$, the value $\chi_4(p)$ is constant on the progression $p\equiv a\pmod q$, namely $\chi_4(p)=\chi_4(a)$. Moreover,
\[
1-\frac{\chi_4(a)}{p}=
\begin{cases}
1-\dfrac1p, & \chi_4(a)=1,\\[2mm]
1+\dfrac1p=\dfrac{1-1/p^2}{1-1/p}, & \chi_4(a)=-1.
\end{cases}
\]
Consequently, for primes $p\equiv a\pmod q$,
\[
\prod_{\substack{p\le x\\ p\equiv a\ (q)}}\Bigl(1-\frac{\chi_4(a)}{p}\Bigr)
=
\left(\prod_{\substack{p\le x\\ p\equiv a\ (q)}}\Bigl(1-\frac1p\Bigr)\right)^{\chi_4(a)}
\cdot
\left(\prod_{\substack{p\le x\\ p\equiv a\ (q)}}\Bigl(1-\frac1{p^2}\Bigr)\right)^{(1-\chi_4(a))/2}.
\]
In particular, the extra factors $(1-1/p^2)^{(1-\chi_4(a))/2}$ occur only when $\chi_4(a)=-1$ and are precisely compensated by the denominator $\prod (1-1/p^2)^{-1}$ coming from Lemma~\ref{lem:factor}. Insert this identity into the previous display, multiply over all $a\in S$, and simplify. One obtains
\[
P_S(x)=
\prod_{a\in S}
\left(\prod_{\substack{p\le x\\ p\equiv a\ (q)}}\Bigl(1-\frac1p\Bigr)\right)^{2\chi_4(a)}
\cdot
\prod_{a\in S}
\left(\prod_{\substack{p\le x\\ p\equiv a\ (q)}}\Bigl(1-\frac1{p^2}\Bigr)\right)^{-\chi_4(a)}
\cdot(1+o(1)).
\]
By the definitions of $C(q,a)$ and $D(q,a)$,
\[
\prod_{\substack{p\le x\\ p\equiv a\ (q)}}\Bigl(1-\frac1p\Bigr)
=
C(q,a)\,(\log x)^{-1/\varphi(q)}(1+o(1))
\]
and
\[
\prod_{\substack{p\le x\\ p\equiv a\ (q)}}\Bigl(1-\frac1{p^2}\Bigr)
=
D(q,a)(1+o(1)).
\]
Substituting these asymptotics yields
\[
P_S(x)=
\left(\prod_{a\in S}\left(\frac{C(q,a)^2}{D(q,a)}\right)^{\chi_4(a)}\right)
(\log x)^{-2\mu(S)/\varphi(q)}(1+o(1)).
\]
This proves the asserted formula for $K(q,S)$. If $\mu(S)=0$, the logarithmic factor disappears and the finite nonzero limit equals the stated product.
\end{proof}

The following observation explains why the constants may be written in terms of Dirichlet $L$-values.
\begin{remark}
Let $G=(\mathbb Z/q\mathbb Z)^\times$. By character orthogonality on $G$, the indicator of a subset $S\subset G$ admits a Fourier expansion
\[
1_S(b)=\frac{1}{|G|}\sum_{\chi\bmod q}\widehat{1_S}(\chi)\chi(b),
\qquad
\widehat{1_S}(\chi)=\sum_{a\in S}\overline{\chi(a)}.
\]
Inserting this expansion into the relevant Euler products yields expressions for $\log C(q,a)$ and $\log D(q,a)$ as finite rational linear combinations of $\log L(1,\chi)$ and $\log L(2,\chi)$.
\end{remark}

\section{Examples}

In the first example we evaluate the full odd-prime product explicitly; in the second we compute a nontrivial thinned product in closed form; the third example illustrates how the accelerated character formula in \ref{app:computation} resolves the first case not covered by an elementary closed form.

\vskip 5pt\noindent {\bf Example 1.} Let $q=4$, and take $S=(\mathbb Z/4\mathbb Z)^\times=\{1,3\}$. Then $\mu(S)=0$, and
\[
\prod_{p\ \mathrm{odd}} A(p)=2.
\]
Indeed, by Lemma~\ref{lem:factor},
\[
\prod_{p\ \mathrm{odd}} A(p)
=
\frac{\left(\prod_{p\ \mathrm{odd}}\left(1-\frac{\chi_4(p)}{p}\right)\right)^2}
{\prod_{p\ \mathrm{odd}}\left(1-\frac{1}{p^2}\right)}.
\]
The Euler product for $L(1,\chi_4)$ gives
\[
\prod_{p\ \mathrm{odd}}\left(1-\frac{\chi_4(p)}{p}\right)=\frac{1}{L(1,\chi_4)}=\frac{4}{\pi}.
\]
Moreover,
\[
\prod_{p\ \mathrm{odd}}\left(1-\frac{1}{p^2}\right)
=\frac{1}{\zeta(2)}\cdot\frac{1}{1-2^{-2}}
=\frac{8}{\pi^2}.
\]
Hence, $\prod_{p\ \mathrm{odd}} A(p)=(4/\pi)^2/(8/\pi^2)=2$.

\vskip 5pt\noindent {\bf Example 2.} Let $q=8$ and $S=\{\pm 1\}$. Let $\chi_8$ be the real character modulo $8$ defined by $\chi_8(p)=1$ for $p\equiv \pm 1\pmod 8$ and $\chi_8(p)=-1$ for $p\equiv \pm 3\pmod 8$ for odd primes $p$. Write
\[
P_+(x)=\prod_{\substack{p\le x\\ p\ \mathrm{odd}\\ p\equiv \pm 1\ (8)}} A(p),
\qquad
P_-(x)=\prod_{\substack{p\le x\\ p\ \mathrm{odd}\\ p\equiv \pm 3\ (8)}} A(p),
\]
and denote $P_+=\lim_{x\to\infty}P_+(x)$ and $P_-=\lim_{x\to\infty}P_-(x)$. Set $S_+=\{\pm 1\}$ and $S_-=\{\pm 3\}$ in $(\mathbb Z/8\mathbb Z)^\times$. By Corollary~\ref{cor:criterion} both limits $P_+$ and $P_-$ exist in $(0,\infty)$, since $\mu(S_+)=\mu(S_-)=0$. Then $P_+P_-=\prod_{p\ \mathrm{odd}}A(p)=2$ by Example 1. Moreover,
\[
\frac{P_+}{P_-}=\prod_{p\ \mathrm{odd}} A(p)^{\chi_8(p)}.
\]
Set $\psi=\chi_4\chi_8$, which is the real character modulo $8$ with $\psi(p)=1$ for $p\equiv 1,3\pmod 8$ and $\psi(p)=-1$ for $p\equiv 5,7\pmod 8$. A check on residue classes shows
\[
A(p)^{\chi_8(p)}=\frac{p-\psi(p)}{p+\psi(p)}
\qquad (p\ \mathrm{odd}).
\]
Using Lemma~\ref{lem:factor} with $\psi$ in place of $\chi_4$ we obtain
\[
\prod_{p\ \mathrm{odd}} A(p)^{\chi_8(p)}
=
\frac{\left(\prod_{p\ \mathrm{odd}}(1-\psi(p)/p)\right)^2}{\prod_{p\ \mathrm{odd}}(1-1/p^2)}
=
\left(\frac{1}{L(1,\psi)}\right)^2\cdot\frac{\pi^2}{8}.
\]
The classical value $L(1,\psi)=\pi/(2\sqrt2)$ gives $P_+/P_-=1$. Therefore, $P_+=P_-=\sqrt2$, hence
\[
\prod_{\substack{p\ \mathrm{odd}\\ p\equiv \pm 1\ (8)}} A(p)=\sqrt{2}.
\]
We use the classical evaluation $L(1,\psi)=\pi/(2\sqrt{2})$ for the real primitive character $\psi$ modulo $8$; see, for instance, Davenport~\cite[Chapter 6]{Davenport}.

\vskip 5pt\noindent {\bf Example 3.} Let $q=16$ and $S=\{\pm 1\}=\{1,15\}$. Here $\mu(S)=0$. Theorem~\ref{thm:constant} gives
\[
K(16,S)=\frac{C(16,1)^2D(16,15)}{C(16,15)^2D(16,1)}.
\]
A direct computation of the partial products
\[
P_S(x)=\prod_{\substack{p\le x\\ p\ \mathrm{odd}\\ p\equiv \pm 1\ (16)}} A(p)
\]
is numerically misleading, because the convergence is slow and not monotone:
\[
P_S(10^6)=1.0000256406\ldots,
\qquad
P_S(10^7)=0.9999838172\ldots.
\]
Using instead the character-sum evaluation described in \ref{app:computation}, with working precision $100$ decimal digits and truncation parameter $n_{\max}=60$, one obtains
\[
\log K(16,\{\pm1\})
= -2.56197851408213398623664392190796013523644611\cdot 10^{-56}.
\]
Thus, $\log K(16,\{\pm1\})=0$ to at least $40$ decimal places, and hence
\[
K(16,\{\pm1\})=1.000000000000000000000000000000000000000\ldots .
\]
This explains why the direct partial products oscillate very close to $1$, while still being inadequate as a reliable method for determining the constant.

\section{Concluding Remarks}

The arguments above apply verbatim to any modulus $q$ with $4$ dividing $q$. Indeed, for such $q$ the function $\chi_4$ is constant on each reduced residue class modulo $q$, and the proofs of Theorems~\ref{thm:asymptotic} and~\ref{thm:constant} rely only on Mertens-type asymptotics in arithmetic progressions and on the identity in Lemma~\ref{lem:factor}. In particular, for any $S\subset(\mathbb Z/q\mathbb Z)^\times$ the product $P_S(x)$ admits the logarithmic asymptotic of Theorem~\ref{thm:asymptotic}, and it converges to a finite nonzero limit precisely when the balance condition $\mu(S)=0$ holds.
For $q=2^m$ there are infinitely many balanced selections $S$, hence infinitely many associated limits $K(q,S)$. A simple family is given by $S=\{\pm 1 \pmod{2^m}\}$, and more generally by any union of residue classes containing equally many classes congruent to $1$ and to $3$ modulo $4$. By character orthogonality one may express the constants $K(q,S)$ in terms of finitely many Dirichlet values $L(1,\chi)$ and $L(2,\chi)$, but we do not pursue explicit evaluations beyond the examples given above.

\newpage
\appendix
\renewcommand{\thesection}{Appendix \Alph{section}}

\section{High-Precision Computation of the Constants}\label{app:computation}

The direct truncation of the prime product defining $P_S(x)$ is useful as a consistency check, but it is not an efficient method for determining the limiting constants. We therefore evaluate these constants by means of a character-sum formula obtained by combining the formula of Languasco and Zaccagnini for the Mertens product constants $C(q,a)$ with the corresponding formula of Languasco and Moree for $\zeta_{q,a}(2)$; see \cite[(5)]{LZII} and \cite[(5.16)]{LM}. The background identities for these constants are given in \cite{LZ}.
Throughout this appendix let $q=2^m$ and let $\chi_0$ denote the principal character modulo $q$. To avoid a conflict with the notation $\mu(S)$ used in the main text, the M\"obius function is denoted by $\Mob$. For $S\subset(\mathbb Z/q\mathbb Z)^\times$ set
\[
c_S(\chi)=\sum_{a\in S}\chi_4(a)\chi(a).
\]
Then the constant in Theorem~\ref{thm:constant} may be evaluated from
\begin{align}
\log K(q,S)
={}&2^{1-m}\mu(S)
\left(
2(\log 2-\gamma)
+
\sum_{k\ge1}\frac1k\sum_{j\ge1}\frac{\Mob(j)}{j}
\log L(2kj,\chi_0)
\right) \notag\\
&\quad+
2^{1-m}
\sum_{\substack{\chi\bmod q\\ \chi\ne\chi_0}}
 c_S(\chi)
\sum_{k\ge1}\frac1k\sum_{j\ge1}\frac{\Mob(j)}{j}
\notag\\
&\quad\quad\times
\left(
\log L(2kj,\chi^j)-2\log L(kj,\chi^j)
\right).
\label{eq:logK-character}
\end{align}
Here $\chi^j$ denotes the pointwise $j$-th power of $\chi$. In the balanced case $\mu(S)=0$, the first line of Equation~\eqref{eq:logK-character} vanishes, and the computation involves only nonprincipal characters in the outer character sum.
In practice the sums over $j$ may be restricted to squarefree $j$, since $\Mob(j)=0$ otherwise. For the numerical implementation it is convenient to reorganize the double sum by putting $n=kj$:
\[
\sum_{k,j\ge1}\frac{\Mob(j)}{kj}F(kj,j)
=
\sum_{n\ge1}\frac1n\sum_{j\mid n}\Mob(j)F(n,j).
\]
In the principal-character part this immediately collapses to the term $n=1$, because $\sum_{j\mid n}\Mob(j)=0$ for $n>1$. The computations below were carried out in Python using arbitrary-precision arithmetic from \texttt{mpmath} \cite{Mpmath,Python}. Dirichlet $L$-values were evaluated via Hurwitz zeta functions, with the usual digamma formula for $L(1,\chi)$ when $\chi$ is nonprincipal. For speed, all characters modulo $2^m$ were evaluated by using the decomposition
\[
(\mathbb Z/2^m\mathbb Z)^\times=\{\pm1\}\times\langle5\rangle.
\]
The scripts used for the tables are available from the author's webpage \cite{WinklerCode2026}. Languasco's publicly available PARI/GP programs provide an independent reference implementation for the same underlying constants \cite{LanguascoPrograms}.
Table~\ref{tab:accelerated-values} gives the computed values for the family $S_m=\{1,2^m-1\}$. Table~\ref{tab:stability} gives the corresponding stability check.

\begin{table}[!ht]
\centering
\scriptsize
\resizebox{\textwidth}{!}{%
\begin{tabular}{@{}r r c c@{}}
\toprule
$m$ & $q=2^m$ & $\log K(q,S_m)$ & $K(q,S_m)$ \\
\midrule
$4$  & $16$   & $|\log K|<3\cdot 10^{-56}$ &
$1.000000000000000000000000000000000000000$ \\
$5$  & $32$   &
$0.055785887828552438941573772577458625844$ &
$1.057371263440564119535037000028605726981$ \\
$6$  & $64$   &
$0.000372834967240502990072479904582710305$ &
$1.000372904478835417597736161858272147763$ \\
$7$  & $128$  &
$0.008748938687609497210384882738233790907$ &
$1.008787322509263329452892922676343582795$ \\
$8$  & $256$  &
$-0.008048465895392386947216571944447322214$ &
$0.991983836287144624613877998261566861441$ \\
$9$  & $512$  &
$0.0008587933600776863752417291695811209997$ &
$1.000859162228681755059086980611386764540$ \\
$10$ & $1024$ &
$0.0005667492584726388994062662029803796723$ &
$1.000566909891178350882758632030919205021$ \\
\bottomrule
\end{tabular}%
}
\caption{Computed values of $\log K(q,S_m)$ and $K(q,S_m)$ for $S_m=\{1,2^m-1\}\subset(\mathbb Z/2^m\mathbb Z)^\times$. The computations used $100$ decimal digits and $n_{\max}=60$ in the truncated character sum. All values are rounded to the last displayed decimal place.}
\label{tab:accelerated-values}
\end{table}

The first two exact cases, $K(4,\{1,3\})=2$ and $K(8,\{1,7\})=\sqrt2$, are treated in Examples 1 and 2. Table~\ref{tab:accelerated-values} begins with the first case not covered by an elementary closed form. The row $m=4$ resolves the case $q=16$, $S=\{\pm1\}$ from Example 3: the raw partial products alone do not determine the constant reliably, whereas the character formula gives $K(16,S)=1$ to at least $40$ decimal places.

\begin{table}[!ht]
\centering
\scriptsize
\resizebox{\textwidth}{!}{%
\begin{tabular}{@{}r r c c@{}}
\toprule
$m$ & $q$ & $\log K$ at $n_{\max}=30$ & $\log K$ at $n_{\max}=60$ \\
\midrule
$4$  & $16$   &
$-5.6599497521\cdot 10^{-29}$ &
$-2.5619785141\cdot 10^{-56}$ \\
$5$  & $32$   &
$0.055785887828552438941573772577458625844$ &
$0.055785887828552438941573772577458625844$ \\
$6$  & $64$   &
$0.000372834967240502990072479904582710305$ &
$0.000372834967240502990072479904582710305$ \\
$7$  & $128$  &
$0.008748938687609497210384882738233790907$ &
$0.008748938687609497210384882738233790907$ \\
$8$  & $256$  &
$-0.008048465895392386947216571944447322214$ &
$-0.008048465895392386947216571944447322214$ \\
$9$  & $512$  &
$0.0008587933600776863752417291695811209997$ &
$0.0008587933600776863752417291695811209997$ \\
$10$ & $1024$ &
$0.0005667492584726388994062662029803796723$ &
$0.0005667492584726388994062662029803796723$ \\
\bottomrule
\end{tabular}%
}
\caption{Stability check for the character-sum computation of Table~\ref{tab:accelerated-values}. The working precision was $100$ decimal digits. All displayed values are real; the imaginary parts arising from the complex-valued character computation were below $10^{-102}$ in absolute value and are omitted.}
\label{tab:stability}
\end{table}

\section{Comparison with Direct Partial Products}\label{app:direct-products}

For comparison, Table~\ref{tab:direct-products} records direct truncations for the balanced thinnings $p\equiv \pm 1\pmod{2^m}$. These values should not be used as the primary numerical determination of the constants; their role is only to check qualitative consistency with Table~\ref{tab:accelerated-values}. The agreement is visible, but the convergence is much slower than the computation based on Equation~\eqref{eq:logK-character}. For example, the direct value for $q=32$ at $x=10^7$ is still only close to the accelerated value $1.05737126344056\ldots$.

\begin{table}[!ht]
\centering
\small
\begin{tabular}{@{}r c c c c@{}}
\toprule
$x$ & $q=8$ & $q=16$ & $q=32$ & $q=64$ \\
\midrule
$10^4$ & 1.4113275632 & 0.9999436166 & 1.0579775029 & 1.0006591649 \\
$10^5$ & 1.4130836158 & 0.9995823760 & 1.0572678232 & 1.0001694785 \\
$10^6$ & 1.4141098422 & 1.0000256406 & 1.0573224354 & 1.0003905341 \\
$10^7$ & 1.4141721971 & 0.9999838172 & 1.0573538661 & 1.0003650135 \\
\bottomrule
\end{tabular}%
\caption{Direct partial products for balanced thinnings $p\equiv \pm 1\pmod q$. The columns report $\prod_{p\le x,\ p\equiv \pm1\ (q)}A(p)$ for the indicated values of $q$.}
\label{tab:direct-products}
\end{table}

Now consider $q=16$ and $S=\{3,11\}$. Both residue classes are congruent to $3$ modulo $4$, hence
\[
\mu(S)=\chi_4(3)+\chi_4(11)=-2,
\]
and the predicted exponent is
\[
-\frac{2\mu(S)}{\varphi(16)}=\frac12.
\]
The character formula in Equation~\eqref{eq:logK-character}, evaluated with $100$ decimal digits and $n_{\max}=160$, gives
\[
K(16,\{3,11\})=1.421545527665792738447977085120563275809\ldots .
\]
Thus,
\[
P_{\{3,11\}}(x)\sim 1.421545527665792738447977085120563275809\ldots\,(\log x)^{1/2}.
\]
The direct normalized values in Table~\ref{tab:unbalanced-direct} are consistent with this value.

\begin{table}[!ht]
\centering
\small
\begin{tabular}{@{}r c c@{}}
\toprule
$x$ & $P_{\{3,11\}}(x)$ & $(\log x)^{-1/2}P_{\{3,11\}}(x)$ \\
\midrule
$10^4$ & 4.3149324585 & 1.4217923138 \\
$10^5$ & 4.8235475228 & 1.4215878898 \\
$10^6$ & 5.2838015898 & 1.4215529197 \\
$10^7$ & 5.7070707662 & 1.4215311910 \\
$10^8$ & 6.1011757220 & 1.4215452732 \\
\bottomrule
\end{tabular}
\caption{Direct values for the unbalanced example $q=16$ and $S=\{3,11\}$.}
\label{tab:unbalanced-direct}
\end{table}

\end{document}